\numberwithin{equation}{section}
\newcommand{\pref}[1]{{\rm (\ref{#1})}}
\def\dinv{\mathrm{dinv}}
\def\area{\mathrm{area}}
\def\riser{\rho}
\def\co{\bleu{\mathbf{co}}}
\def\comp{\models}
\def\srank{\mathrm{rank}}
\def\defn#1{\bleu{\bf #1}}
\def\Z{\mathbb{Z}}
\def\S{{\mathbb S}}
\def\N{{\mathbb N}}
\def\mbf#1{{\mathbf #1}}
\def\part#1#2{\bleu{\rouge{\{} \{#1\}\rouge{,}\{#2\}\rouge{\}}}}
\newcommand{\scalar}[2]{{\langle#1,#2 \rangle}}
\def\auteur#1{{\sc #1}}
\def\titreref#1{{\em #1}}
\def\vol#1{{\bf #1}}
\newcommand{\Park}{\mathscr{P}}
\def\Dyck#1#2{\mathscr{D}_{#1,#2}}
\def\Bin#1#2{\mathscr{B}_{#1,#2}}
\def\cBin#1#2{\mathscr{C}_{#1,#2}}
\def\LBin#1#2{\mathscr{L}_{#1,#2}}
\def\Cat#1#2{{C}_{#1,#2}}
\def\Primitive#1#2{{C}'_{#1,#2}}
\newdimen\carrelength
\def\jcarre{\jaune{\linethickness{\carrelength}\line(1,0){.85}}}
\def\define#1{\bleu{\bf #1}}
\def\bleu{\textcolor{blue}}
\def\rouge{\textcolor{red}}
\def\jaune{\textcolor{yellow}}
\def\bleupale#1{{\color{SkyBlue} #1}}
\newtheorem{theorem}{\bleu{Theorem}}
\newtheorem{proposition}[theorem]{\bleu{Proposition}}
\newtheorem{lemma}[theorem]{\bleu{Lemma}}
\begin{document} 

\title[Interlaced parking functions]{\bleu{\large Interlaced rectangular parking functions}}
 \author[J.-C. Aval]{Jean-Christophe Aval}
\address{LaBRI, CNRS, Universit\'e de Bordeaux,
351 cours de la Lib\'eration, 33405 Talence, France.}
 \email{aval@labri.fr}
  \author[F.~Bergeron]{Fran\c{c}ois Bergeron}
\address{D\'epartement de Math\'ematiques, UQAM,  C.P. 8888, Succ. Centre-Ville, 
 Montr\'eal,  H3C 3P8, Canada.}
 \date{\today. This work was supported by NSERC-Canada.}
 \email{bergeron.francois@uqam.ca}

\begin{abstract}
The aim of this work is to extend to a general $\S_m\times\S_n$-module context 
the Grossman-Bizley \cite{bizley,grossman} paradigm that allows the enumeration of
Dyck paths in a $m\times n$-rectangle. 
We obtain an explicit formula for the the ``bi-Frobenius'' characteristic of what we call {\em interlaced} rectangular parking functions in an $m\times n$-rectangle.
These are obtained by labelling the $n$ vertical steps of an $m\times n$-Dyck path by the numbers from $1$ to $n$,
together with an independent labelling of its horizontal steps by integers from $1$ to $m$.
Our formula specializes to give the Frobenius characteristic of the $\S_n$-module of $m\times n$-parking functions in the general situation. Hence,
it subsumes the result of  Armstrong-Loehr-Warrington of~\cite{armstalk} which furnishes such a formula for the special case when $m$ and $n$ 
are coprime integers.\end{abstract}

\maketitle
 \parskip=0pt

{ \setcounter{tocdepth}{1}\parskip=0pt\footnotesize \tableofcontents}
\parskip=8pt  


\section*{Introduction}


The purpose of this paper is to extend to the context of parking functions the Grossman-Bizley enumeration formula (see~\cite{bizley,grossman}) for the number of Dyck-like paths in an $m\times n$-rectangle. These are the south-east lattice paths that start from the north-west corner of the rectangle, end at its south-east corner, and stay below the line between these corners. To each such path we associate a family of parking functions, seen as labellings of the vertical steps of the path. 
We therefore consider enumeration problems for the global set of such functions; which is called the set of $(m,n)$-parking functions.
More precisely, we obtain explicit formulas for the character (P\'olya-enumeration) of the $\S_n$-module of $(m,n)$-parking function in the general context. Such formulas have already been established (see \cite{armstalk}) in the special ``coprime'' case, \emph{i.e.}: when $m$ and $n$ are coprime. 

We then extend our approach to get formulas for bi-labelled paths. This means that we independently label south-steps by the number $1$ to $n$, and east-steps by the numbers from $1$ to $m$. We give an explicit formula (see~\ref{eq:theo_xy}) for the character of the resulting $\S_n\times\S_m$-module, thus characterizing its decomposition into irreducibles for the joint (commuting) actions of $\S_n$ and $\S_m$.



\section{Rectangular Dyck paths}\label{sec_dyck}
   
  An \define{$(m,n)$-Dyck paths}  is a south-east lattice path, going from $(0,n)$ to $(m,0)$, which stays above the \define{$(m,n)$-diagonal}. This is the line segment joining $(0,n)$ to $(m,0)$. See Figure~\ref{fig1} for an example. 
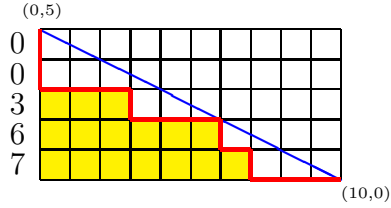
\begin{figure}[ht]
\setlength{\unitlength}{4mm}
\setlength{\carrelength}{4mm}
\def\jcarre{\put(0,0){\jaune{\linethickness{\carrelength}\line(1,0){1}}}}
\def\palecarre{\bleupale{\linethickness{\unitlength}\line(1,0){1}}}
\begin{center}
\begin{picture}(11,5)(0,0)
\put(0,.5){\multiput(0,2)(1,0){3}{\jcarre}
                   \multiput(0,1)(1,0){6}{\jcarre}
                   \multiput(0,0)(1,0){7}{\jcarre}}
\multiput(0,0)(0,1){6}{\line(1,0){10}}
\multiput(0,0)(1,0){11}{\line(0,1){5}}
\thicklines
\put(-.6,5.5){$\scriptscriptstyle(0,5)$}
\put(10,-.6){$\scriptscriptstyle(10,0)$}
 \put(0,5){\bleu{\line(2,-1){10}}}
  \linethickness{.5mm}
\put(0,5){\rouge{\line(0,-1){1}}}\put(-1,0.2){$7$}
\put(0,4){\rouge{\line(0,-1){1}}}\put(-1,1.2){$6$}
\put(0,3){\rouge{\line(1,0){3}}}
\put(3,3){\rouge{\line(0,-1){1}}}\put(-1,2.2){$3$}
\put(3,2){\rouge{\line(1,0){3}}}
\put(6,2){\rouge{\line(0,-1){1}}}\put(-1,3.2){$0$}
\put(6,1){\rouge{\line(1,0){1}}}
\put(7,1){\rouge{\line(0,-1){1}}}\put(-1,4.2){$0$}
\put(7,0){\rouge{\line(1,0){3}}}
\end{picture}\end{center}
\caption{The $(10,5)$-Dyck path encoded as $76300$.}
\label{fig1}
\end{figure}

We encode such paths as decreasing integer sequences
   $$\alpha=a_1a_2\cdots a_n,\qquad {\rm with}\quad 0\leq a_k\leq (n-k)\,m/n.$$
with each $a_k$ giving the distance between the $y$-axis of the (unique) south-step that starts at level $k$.  In other terms, $\alpha$ is some integer partition, with added $0$-parts to make it of length $n$, lying inside the \define{$(m,n)$-staircase} 
   \begin{displaymath}
      \bleu{\delta_{m,n}:=d_1d_2\cdots d_n},\qquad{\rm with}\qquad  \bleu{d_k:= \lfloor (n-k)\,m/n\rfloor}.
   \end{displaymath}
Hence it makes sense to say that the \defn{conjugate path} of an $(m,n)$-path $\alpha$, denoted by $\alpha'$, is the $(n,m)$-path that corresponds to the conjugate partition.   
Examples of staircases are
$$\begin{array}{rclrclrclrcl}
\delta_{{1,4}}=0000,& \delta_{{2,4}}=1100,& \delta_{{3,4}}=2100,& \delta_{{4,4}}=3210,\\[4pt]
\delta_{{5,4}}=3210,& \delta_{{6,4}}=4310,& \delta_{{7,4}}=5310,& \delta_{{8,4}}=6420,\\[4pt]
\delta_{{9,4}}=6420,& \delta_{{10,4}}=7520,& \delta_{{11,4}}=8520,& \delta_{{12,4}}=9630.
\end{array}$$
It is easy to check that $\delta_{kn,n}=\delta_{kn+1,n}$. We denote by $\Dyck{m}{n}$, the set of $(m,n)$-Dyck paths, and by $\Cat{m}{n}$ its cardinality. For example, we have 
 $$\Dyck{5}{4}=\{ 0000, 1000, 2000, 3000, 0011, 2100, 3100, 2200, 3200, 1110, 2110, 3110, 2210, 3210\}.$$
 It follows from the observation that $\delta_{kn,n}=\delta_{kn+1,n}$, that we have the set equality
\begin{equation}\label{observation}
   \bleu{\Dyck{kn}{n}=\Dyck{kn+1}{n}}.
 \end{equation}
Examples of  values of $\Cat{m}{n}:=\#\Dyck{m}{n}$ are given in the following table.
\begin{center}\renewcommand{\arraystretch}{1.5}
\begin{tabular}{c||c|c|c|c|c|c|c|c|c|c|c|}
$m\setminus n$&1&2&3&4&5&6&7&8&9\\
\hline\hline
1&1&1&1&1&1&1&1&1&1\\ 
\hline
2&1&2&2&3&3&4&4&5&5\\ 
\hline
3&1&2&5&5&7&12&12&15&22\\ 
\hline
4&1&3&5&14&14&23&30&55&55\\ 
\hline
5&1&3&7&14&42&42&66&99&143\\ 
\hline
6&1&4&12&23&42&132&132&227&377\\ 
\hline
7&1&4&12&30&66&132&429&429&715\\ 
\hline
\end{tabular}
\end{center}

Observe that it is only when  $k$ takes the form $k=n-j\,b$, with $d=\gcd(m,n)$ and $(m,n)=(ad,bd)$, that we may have $(n-k)\,m/n$ lying in $\N$. Thus $(m,n)$-paths $\alpha$  may only \define{return} to the diagonal at such positions $k$. The set of such return positions clearly forms a subset of $\{1,\ldots,d-1\}$, wich is encoded in the usual manner\footnote{Recall that to a composition $\gamma=(c_1,\ldots,c_k)$ this correspondance associates the set of partial sums $S(\gamma)=\{s_1,s_2,\ldots,s_k\}$, where
    $s_i=c_1+c_2 +\cdots+c_i$, with $1\leq i<k$.} as a composition of $d$.

When $m$ and $n$ are coprime, the enumeration of $(m,n)$-Dyck path is given by the long known formula\footnote{Which may be obtained by a classical cyclic argument maybe due to Dvoretzky-Motzkin (see~\cite{motzkin}), or even earlier to Lukasiewicz.}   
  \begin{displaymath}
   \bleu{\Cat{m}{n} =\frac{1}{m+n}\binom{m+n}{n}}.
\end{displaymath}
Observation~\ref{observation}, and a simple calculation, implies that the classical Catalan numbers (or more generally Fuss-Catalan numbers) can be obtained from this.

When $m$ and $n$ have a greatest common divisor $d$ other than $1$, the relevant formula is more complicated and seems to have escaped the attention of many until recently.
In fact, it appears that it was first stated in 1950 by Grossman, and then proved by Bizley~\cite{bizley} in 1954. As we will see more clearly later, it is usefull to recast this formula in terms of ring homomorphism s applied to symmetric functions. More specifictly, for each fixed coprime pair $(a,b)$, consider the ring homomorphism  
   $$\theta_{a,b}:\Lambda\longrightarrow \Z,$$
such that 
\begin{displaymath}
     	   \bleu{\theta_{a,b}(p_k(\mbf{x})):= \frac{1}{a+b}\binom{ak+bk}{ak}},
\end{displaymath} 
where $p_k(\mbf{x})$ stands for the classical \define{power sum symmetric functions}.
Then, for $(m,n)=(ad,bd)$ with $d=\gcd(m,n)$, Bizley-Grossman formula may be very simply written as
  \begin{equation}\label{bizley_formula}
   \bleu{\Cat{ad}{bd}=\theta_{a,b}(h_d(\mbf{x}))},
\end{equation}
where $h_d(\mbf{x})$ stands\footnote{We are using here Macdonald's~\cite{macdonald} notations.} for the usual \define{complete homogeneous symmetric function}.
Recall that, in generating function format, the link between power sum and complete homogeneous symmetric functions may be expressed as
\begin{equation}
   \bleu{ \sum_{d=0}^\infty h_d(\mbf{x})\,w^d = \exp\!\left(\sum_{k\geq 1} p_k(\mbf{x})\, \frac{w^k}{k}\right)}.
\end{equation} 
Hence, in generating function terms, formula~\ref{bizley_formula}  may be written as
\begin{equation}\label{bizley_gen}
   \bleu{ \sum_{d=0}^\infty \Cat{ad}{bd},w^d = \exp\!\left(\sum_{k\geq 1} \frac{1}{a+b}\binom{ak+bk}{ak}\, \frac{w^k}{k}\right)}.
\end{equation} 
This will be derived from a more general formula in the sequel (see Proposition \ref{prop_bizley}). For example, we have
\begin{eqnarray*}
  \bleu{\Cat{2a}{2b}}&=& \bleu{\frac{1}{2} \left(\frac{1}{a+b} \binom{a+b}{a}\right)^2+\frac{1}{2}\left(\frac{1}{a+b} \binom{2a+2b}{2a}\right)},\\[4pt]
  \bleu{\Cat{3a}{3b}}&=&\bleu{\frac{1}{6}\left(\frac {1}{ a+b} \binom{a+b}{a}\right)^3
  +\frac{1}{2}\left(\frac {1}{a+b} \binom{a+b}{a}\right)\left(\frac{1}{a+b} \binom{2a+2b}{2a}\right)}\\
  &&\qquad \qquad\bleu{+\frac{1}{3}\left(\frac {1}{a+b} \binom{3a+3b}{3a}\right).
}
\end{eqnarray*}
Bizley also showed (and we will see that this generalized as well in the sequel) that the number $\Primitive{ad}{bd}$ of \define{primitive $(ad,bd)$-Dyck paths}, is given by 
    \begin{equation}
        \bleu{\Primitive{ad}{bd}=\theta_{a,b}((-1)^{d-1}e_d(\mbf{x}))}
     \end{equation} 
where the $e_d(\mbf{x})$ are the \define{elementary symmetric functions}. Recall that primitive paths are those that remain strickly above the diagonal. From this, it easily follows that one can enumerate the set of $(m,n)$-Dyck paths with returns to the diagonal encoded by a composition  $\gamma=(c_1,\ldots,c_k)$ of  $d$.
These are the $(m,n)$-Dyck paths that go through the points $(a\,d_i,b\,d_i)$, with 
    \begin{equation}\label{return_points}
        (a\,d_i,b\,d_i),\quad {\rm with}\quad d_i=c_1+\ldots +c_i,\quad {\rm for}\quad 1\leq i\leq k.
     \end{equation}
The relevant enumeration formula is then 
   \begin{equation}
     \bleu{\#\mathscr{Dyck}^{a/ b}_\gamma = \theta_{a,b}((-1)^{d-k}e_{c_1}(\mbf{x})\cdots e_{c_k}(\mbf{x}))},
   \end{equation}
 where we denote by $\mathscr{Dyck}^{a/ b}_\gamma$ the set of $(m,n)$-Dyck paths having returns to the diagonal exactly at the points specified by~\pref{return_points}. 
Clearly, the set $\Dyck{m}{n}$ of all $(m,n)$-Dyck paths decomposes as the disjoint union\footnote{We use summation to denote disjoint union.}
    \begin{displaymath}
    \bleu{\Dyck{m}{n}=\sum_{\gamma\models d} \mathscr{Dyck}_\gamma^{a/ b}},
\end{displaymath} 
where $\gamma\models d$ means that $\gamma$ is a composition of $d$. The set $\mathscr{Dyck}'_{m,n}$ of primitive $(ad,bd)$-Dyck paths simply corresponds to the case of the one part composition $\gamma=(d)$, \emph{i.e.}:
    \begin{displaymath}
    \bleu{\mathscr{Dyck}'_{m,n}= \mathscr{Dyck}_{(d)}^{a/ b}},
\end{displaymath}

\section{\texorpdfstring{$(m,n)$}--parking functions} \label{sec_park}
To each $(m,n)$-Dyck path $\alpha=a_{1} a_{2}\cdots a_{n}$, we associate the set $\Park_\alpha$ of \define{$\alpha$-parking function}:
\begin{displaymath}
   \bleu{\Park_\alpha:=\{a_{\sigma(1)} a_{\sigma(2)}\cdots a_{\sigma(n)} \ |\ \sigma\in\S_n\}}.
    \end{displaymath}
For $\pi\in\Park_\alpha$ we also say that $\alpha$ is the \define{shape} of $\pi$. Observe that $\alpha$-parking functions may be identified with standard Young tableaux\footnote{Naturally using french notation.} of skew shape $(\alpha+1^n)/\alpha$, where $\alpha+1^n$ is the partition having parts $a_k+1$. Indeed, if $k$ sits in row $j$ of $(\alpha+1^n)/\alpha$, then one sets $\pi:=b_1b_2\ldots b_n$, with $b_k:=a_j$.

By definition, the symmetric group acts transitively on $\Park_\alpha$. Indeed, for $\pi=b_1b_2\cdots b_n$, for $\sigma\in\S_n$ on $\pi$, we have
  \begin{displaymath}
   \bleu{\sigma\cdot \pi:=b_{\sigma^{-1}(1)} b_{\sigma^{-1}(2)}\cdots b_{\sigma^{-1}(n)}}.
    \end{displaymath}
 The set of \define{$(m,n)$-parking functions}, denoted by $\Park_{m,n}$,  is the set of $\alpha$-parking functions with $\alpha$ varying in the set of $(m,n)$-Dyck paths:
 $$\bleu{\Park_{m,n}:=\sum_{\alpha\Dyck{m}{n}} \Park_\alpha}.$$
  It clearly affords a permutation action of $\S_n$, whose orbits are the $\Park_\alpha$.
For example, the set $\Park_{3,6}$ contains the $49$ parking functions
\begin{displaymath}\begin {array}{ccccccc} 
000&001&002&003&004&010&011\\ 
012&013&014&020&021&022&023\\ 
024&030&031&032&040&041&042\\  
100&101&102&103&104&110&120\\  
130&140&200&201&202&203&204\\  
210&220&230&240&300&301&302\\  
310&320&400&401&402&410&420.\end {array}
\end{displaymath}
Clearly, the stabilizer of an $(m,n)$-Dyck path $\alpha$ (considered as a  special case of $(m,n)$-parking-function) is the Young subgroup
    $$\S_\rho:=\S_{r_0}\times \S_{r_1}\times \cdots\times \S_{r_m},$$
where $\rho=\rho(\alpha):=(r_0,r_1,\ldots,r_m)$, with $r_i$ equal to the number of occurences of $i$ in $\alpha$. We may as well remove zero parts from $\rho(\alpha)$, since these parts play no role. The result is said to be the \define{riser composition} of $\alpha$. It follows that the number of $\alpha$-parking function is given by the multinomial coefficient
   \begin{equation}
       \bleu{\#\Park_\alpha=  \binom{n}{\riser(\alpha)}:=\frac{n!}{r_0!r_1!\cdots r_k!}},
   \end{equation}
and thus
 \begin{equation}\label{dim_parking}
     \bleu{\#\Park_{m,n}= \sum_{\alpha\in \Dyck{m}{n} }  \binom{n}{\riser(\alpha)}}.
\end{equation}
When $m$ and $n$ are coprime, $(m,n)$-parking functions may be seen to give cannonical coset representatives of the subgroup $H:=u\,\Z$, with $u=(1,1,\ldots,1)$, inside the abelian group $\Z_{m}^{n}$. 
Here, elements of $\Z_{m}^{n}$ correspond to general sequences of length $m$, with entries between $0$ oand $m-1$; wheras $(m,n)$-parking functions correspond to the special case for which such a sequence becomes an $(m,n)$-Dyck when its entries are sorted (from smallest to largest). Indeed, it may be shown that each coset contains a unique $(m,n)$-parking function. It follows that 

\begin{lemma}[Armstrong-Loehr-Warringtion~\cite{armstalk}]
The number of $(m,n)$-parking functions is
\begin{equation}\label{dim_deuxab}
   \bleu{\#\Park_{m,n}=m^{n-1} },
\end{equation}
when $m$ and $n$ are coprime.
\end{lemma}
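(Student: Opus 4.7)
The approach is the coset-identification sketched in the text. The ambient group $\Z_m^n$ has order $m^n$, and the cyclic subgroup $H = u\Z$ generated by $u = (1, 1, \ldots, 1)$ has order exactly $m$ (since $ju$ vanishes in $\Z_m^n$ only when $m \mid j$), so $[\Z_m^n : H] = m^{n-1}$. It thus suffices to establish the sharper claim that every coset $b + H$ contains a unique $(m, n)$-parking function.

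To prove this, fix $b \in \Z_m^n$ with increasing rearrangement $c_1 \leq \cdots \leq c_n$, and let $P_b$ denote the associated $(m, n)$-lattice path from $(0, n)$ to $(m, 0)$ whose south-step at level $k$ lies at $x = c_{n+1-k}$. As $j$ varies over $\{0, \ldots, m-1\}$, the path $P_{b + ju}$ is obtained from $P_b$ by cyclically translating every south-step $j$ units to the right along the $\Z_m$ axis; in step-sequence language, each unit $u$-shift advances the cyclic starting position past exactly one east-step. Consequently, the $m$ coset translates $\{b + ju\}$ correspond bijectively to the $m$ rotations of $P_b$'s step sequence whose starting position lies immediately after an east-step, while the remaining $n$ rotations (those starting immediately after a south-step) do not arise from any coset translate.

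The proof is completed by the Dvoretzky--Motzkin cycle lemma. Assigning the value $+m$ to each south-step and $-n$ to each east-step turns the step sequence into a cyclic word of length $m + n$ with total sum $0$, and the parking condition $a_k \leq (n-k)m/n$ becomes equivalent to requiring that all partial sums along the path be weakly non-negative. The cycle lemma, in the coprime case $\gcd(m, n) = 1$, asserts that exactly one of the $m + n$ rotations satisfies this condition, namely the one starting immediately after the unique minimum of the partial sums. Since east-steps decrease and south-steps increase the partial sum, this minimum is always attained just after an east-step; hence the unique Dyck rotation is always an east-step rotation and therefore coincides with exactly one of the $m$ coset translates. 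This proves the claim and yields $\#\Park_{m, n} = m^{n-1}$.

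The main obstacle is making the correspondence \emph{$u$-shift $\leftrightarrow$ advance past one east-step} precise in the wrap-around case where some coordinates of $b$ equal $m - 1$. When $q \geq 1$ such coordinates wrap to $0$, the corresponding block of south-steps of $P_b$ jumps from $x = m - 1$ to $x = 0$, which in the step sequence moves a contiguous block from the right end to the left end; nonetheless the net effect on the cyclic starting position is still a single east-step advance. Once this cyclic bookkeeping is verified, the bijection between coset translates and east-step rotations is established, the cycle lemma supplies the unique Dyck rotation, and the theorem follows.
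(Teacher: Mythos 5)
Your proposal is correct and follows the same route the paper takes: identifying $(m,n)$-parking functions with canonical representatives of the cosets of $H=u\,\Z$ in $\Z_m^n$, so that the count is the index $m^{n-1}$. The paper merely asserts that each coset contains a unique parking function, whereas you supply the missing justification --- the bijection between the $m$ coset translates and the $m$ cyclic rotations starting after an east-step, combined with the Dvoretzky--Motzkin cycle lemma (which the paper itself cites in a footnote for the coprime count of Dyck paths) --- and that argument is sound, including the wrap-around bookkeeping.
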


When $m$ and $n$ are not coprime,  cosets of $H$ will contain up to $d$ ($>0$) elements that are $(m,n)$-parking functions. Judisciously exploiting this fact, one can get an analog of formula~\pref{bizley_formula}. We will not do this, since it actually follows from an even finer result discussed in Section~\ref{sec_frob_park}. 
Table~\ref{tab3} gives small explicit values.
\begin{table}[ht]\renewcommand{\arraystretch}{1.5}
\begin{center}
\begin{tabular}{c||c|c|c|c|c|c|c|c|}
$n\setminus m$&1&2&3&4&5&6&7&8\\
\hline\hline
1&1&1&1&1&1&1&1&1\\
\hline
2&1&3&3&5&5&7&7&9\\ 
\hline
3&1&4&16&16&25&49&49&64\\ 
\hline
4&1&11&27&125&125&243&343&729\\
\hline 
5&1&16&81&256&1296&1296&2401&4096\\ 
\hline
6&1&42&378&1184&3125&16807&16807&35328\\ 
\hline
7&1&64&729&4096&15625&46656&262144&262144\\ 
\hline
\end{tabular}
\end{center}
\medskip
\caption{Number of $(m,n)$-parking functions.}\label{tab3}
\end{table}

\section{Frobenius of the parking function representations} For a fixed integer partition $\rho$ of $n$, consider the transitive permutation action of $\S_n$ on the set of \define{$\rho$-set-partitions} of $\{1,2,\ldots,n\}$. These are the set partitions that have parts size specified by $\rho$. For example, with $\rho=32$, the corresponding set contains the $10$ set partitions
 $$  \begin{matrix}
      \part{1,2,3}{4,5}, & \part{1,2,4}{3,5}, & \part{1,2,5}{3,4}, & \part{1,3,4}{2,5},\\[4pt]
      \part{1,3,5}{2,4},  &  \part{1,4,5}{2,3}, &  \part{2,3,4}{1,5}, &  \part{2,3,5}{1,4},\\[4pt]
      \part{2,4,5}{1,3},   &  \part{3,4,5}{1,2}.
       \end{matrix} $$
Up the linearization, one may consider this as a representation of $\S_n$, having dimension equal to the multinomial coefficient
   \begin{displaymath} \bleu{\binom{n}{\rho} = \frac{n!}{\rho_1!\rho_2!\cdots \rho_k!}}.\end{displaymath}
It is classical that the Frobenius transform\footnote{We simply say: Frobenius characteristic.} of the character of the resulting $\S_n$-module is $h_\rho(\mbf{x}):=h_{\rho_1}(\mbf{x})h_{\rho_2}(\mbf{x})\cdots h_{\rho_k}(\mbf{x})$. Recall that this means that the coefficients of the Schur-function expansion of $h_\rho(\mbf{x})$ correspond to multiplicity of irreducibles.
 
For a given $(m,n)$-Dyck path $\alpha$ of height $n$, the $\S_n$-action on $\alpha$-parking functions  is isomorphic to the action of $\S_n$ on $\rho$-set-partitions, with $\rho=\rho(\alpha)$ equal to the riser-composition $\alpha$.
We write 
    $$\bleu{\alpha(\mbf{x}):=h_{\rho(\alpha)}(\mbf{x})},$$
 for the associated Frobenius characteristic.

 It follows from this that the Frobenius characteristics of the $\S_n$-action on $(m,n)$-parking functions, which we denote by $\Park_{m,n}(\mbf{x})$, can be calculated as follows
  \begin{equation}\label{park_expl}
      \bleu{\Park_{m,n}(\mbf{x})=\sum_{\alpha\in \Dyck{m}{n}}   \alpha(\mbf{x})}.
    \end{equation}
As discussed in~\cite{armstalk}, and borrowing a presentation format inspired by \cite{stanley_parking}, we have the following formulas.
  \begin{proposition}[Armstrong-Loehr-Warrington]\label{prop_frob_park}
When $a$ and $b$ are coprime, we have
   \begin{eqnarray}
     \bleu{\Park_{m,n}(\mbf{x})} 
            &=&\bleu{\frac{1}{m}\,\sum_{\lambda\vdash n} m^{\ell(\lambda)}\, \frac{p_\lambda(\mbf{x})}{z_\lambda} }\label{park_h}\\   
      &=&\bleu{\frac{1}{m}\,\sum_{\lambda\vdash n} s_\lambda(1^m)\,s_\lambda(\mbf{x})} \label{multiciplites_park}\\
      &=&\bleu{ \sum_{\lambda\vdash b} \frac{(m-1)\,(m-2)\cdots (m-\ell(\lambda)+1)}{d_1(\lambda)!\cdots  d_k(\lambda)! }\,h_\lambda(\mbf{x})}\nonumber,
   \end{eqnarray} 
 \bleu{where $d_i(\lambda)$ is the number of parts of size $i$ in $\lambda$}.
\end{proposition}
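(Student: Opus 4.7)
The strategy is to exploit the $\S_n$-equivariant bijection $\Park_{m,n} \bijection \Z_m^n/H$ (with $H = u\Z$ the diagonal cyclic subgroup, fixed pointwise by $\S_n$) noted just above the proposition, and to compute the resulting permutation character of $\S_n$ on the quotient. Once this character is known, the three formulas will follow by routine symmetric-function manipulations.

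For the character, fix $\sigma \in \S_n$ of cycle type $\lambda = (\lambda_1, \ldots, \lambda_\ell)$. A coset $\pi + H$ is fixed by $\sigma$ iff $\sigma \cdot \pi - \pi = c\,u$ for some $c \in \Z_m$. Along each cycle $(j_1, \ldots, j_{\lambda_i})$ of $\sigma$ this forces $\pi_{j_{s+1}} = \pi_{j_s} - c$, and closure around the cycle requires $\lambda_i c \equiv 0 \pmod m$. Simultaneous validity across all cycles means $c$ must range over the multiples of $m/\gcd(m, \lambda_1, \ldots, \lambda_\ell)$ in $\Z_m$. For each such admissible $c$, one free choice per cycle produces $m^\ell$ valid $\pi$; these partition into fixed cosets of size $m$, giving
\begin{equation*}
   \chi(\sigma) \;=\; m^{\ell - 1}\,\gcd(m,\lambda_1, \ldots, \lambda_\ell).
\end{equation*}
Since $\lambda_1 + \cdots + \lambda_\ell = n$ and $\gcd(m,n) = 1$, any common divisor of $m$ and the $\lambda_i$ must also divide $n$, hence is $1$. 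Therefore $\chi(\sigma) = m^{\ell(\lambda)-1}$, and the standard Frobenius formula $\sum_\lambda \chi(\lambda)\,p_\lambda/z_\lambda$ for permutation characters yields (\ref{park_h}) immediately.

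Formula (\ref{multiciplites_park}) now follows by substituting $\mbf{y} = 1^m$ into the Cauchy identity $\sum_\lambda p_\lambda(\mbf{y})\,p_\lambda(\mbf{x})/z_\lambda = \sum_\lambda s_\lambda(\mbf{y})\,s_\lambda(\mbf{x})$ and using $p_k(1^m) = m$. For the third formula, recognize that $m \cdot \Park_{m,n}(\mbf{x}) = h_n[m\mbf{X}]$; the generating-function identity $\sum_n h_n[m\mbf{X}]\,t^n = \left(\sum_n h_n\, t^n\right)^m$ then expands this as $\sum h_{n_1}(\mbf{x}) \cdots h_{n_m}(\mbf{x})$, summed over weak compositions $(n_1, \ldots, n_m)$ of $n$ into $m$ parts. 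Grouping compositions by the partition $\mu$ formed from their nonzero entries, the multiplicity of $h_\mu(\mbf{x})$ is $\binom{m}{\ell(\mu)}\,\ell(\mu)!/\prod_i d_i(\mu)! = m(m-1)\cdots(m-\ell(\mu)+1)/\prod_i d_i(\mu)!$, and dividing by $m$ completes the derivation.

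The principal obstacle is the character computation: carefully tracking both the simultaneous congruences constraining $c$ and the coset-size overcounting factor. This is also where the coprimality hypothesis plays its decisive role — without $\gcd(m,n) = 1$ the gcd factor survives, the character fails to be a pure power of $m$, and (\ref{park_h}) acquires unwanted corrections.
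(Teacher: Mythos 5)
Your proof is correct, but it follows a genuinely different route from the paper. The paper does not argue Proposition~\ref{prop_frob_park} directly: it cites Armstrong--Loehr--Warrington for it, and then recovers formula \pref{park_h} as the $d=1$ specialization of the more general Proposition~\ref{prop_bizley}, whose proof is a Bizley/cycle-lemma argument --- one computes the Frobenius characteristic of all labelled lattice paths $\LBin{m}{n}$ (namely $\sum_{\lambda\vdash n} m^{\ell(\lambda)} p_\lambda(\mbf{x})/z_\lambda$) and then divides by $m$ via the rotation bijection \pref{bij:bizley1}, which in the coprime case pairs each Dyck path with $m$ rotations of a single highest-point representative. You instead work in the coset model $\Z_m^n/H$ and compute the permutation character by counting fixed cosets; your count $\chi(\sigma)=m^{\ell(\lambda)-1}\gcd(m,\lambda_1,\dots,\lambda_\ell)$ is right (the solutions of $\lambda_i c\equiv 0 \pmod m$ for all $i$ are exactly the multiples of $m/\gcd(m,\lambda_1,\dots,\lambda_\ell)$, each admissible $c$ yields $m^{\ell}$ vectors $\pi$, and each fixed coset is counted $m$ times), and the coprimality step killing the gcd is exactly where the hypothesis enters. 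The subsequent manipulations (Cauchy identity at $\mbf{y}=1^m$, and the expansion of $h_n[m\mbf{X}]=(\sum_n h_n t^n)^m\big|_{t^n}$ grouped by partition) are standard and correct. Two remarks on what each approach buys. Your argument rests on the assertion --- stated in the paper but not proved there --- that each coset of $H$ contains a unique $(m,n)$-parking function and that this bijection is $\S_n$-equivariant; that fact itself is usually established by the same cycle-lemma rotation the paper uses, so your proof is not fully self-contained as written. On the other hand, your method delivers the character value at every $\sigma$ explicitly, whereas the paper's route has the advantage of extending verbatim to the non-coprime case, which is the point of Proposition~\ref{prop_bizley}. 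Minor note: the third displayed formula's index ``$\lambda\vdash b$'' should read ``$\lambda\vdash n$'', as your derivation implicitly (and correctly) assumes.
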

From~\pref{multiciplites_park}, we may calculate that the respective multiplicities of the trivial and the sign representation in $\Park_{m,n}$, are given (as expected) by
      \begin{displaymath}
        \bleu{\frac{1}{m}\,h_n(1^m)=\frac{1}{m+n}\,\binom{m+n}{n}},
 \qquad {\rm and}\qquad
         \bleu{\frac{1}{m}\,e_n(1^m)=\frac{1}{m}\,\binom{m}{n}},
       \end{displaymath}
 since these occur respectively as coefficients of $s_n(\mbf{x})$ and $s_{1^n}(\mbf{x})$. More generally, the other multiplicities may be obtained using the classical evaluation (involving hook lengths) 
    $$s_\lambda(1^m)=\prod_{(i,j)\in\lambda} \frac{m+i-j}{(\lambda_j-i)+(\lambda'_i-j)+1}.$$
It is also worth recalling that the sign-twisted version\footnote{This simply means that we replace $ p_k(\mbf{x})$ by $(-1)^{k-1}p_k(\mbf{x})$ in \pref{park_h}.} of $\Park_{n,n}(\mbf{x})$ is the Frobenius of the space of ``diagonal harmonics''.

\subsection*{Formula for the non coprime case}\label{sec_frob_park}
To get formulas for the non coprime case, we generalize \pref{bizley_formula} in a natural manner as below. Once again, we assume that $(m,n)=(ad,bd)$ with $(a,b)$ coprime, and consider $\gamma$ a composition of $d$. We adapt to parking functions our notations from Section~\ref{sec_dyck}, hence we denote by $\Park^{a/b}_{\gamma}$ the set of $(ad,bd)$-parking function whose underlying path lies in the set $\mathscr{D}^{a/b}_\gamma$:
   $$\bleu{\Park^{a/b}_{\gamma}:=\sum_{\alpha\in \mathscr{D}^{a/b}_\gamma}\Park_\alpha}.$$
Likewise, $\Park_{m,n}'$ is the set of \define{primitive $(m,n)$-parking functions},  those whose underlying paths only touch the diagonal at both ends. Keeping on with our previous conventions, we set
   $$\bleu{\Park^{a/b}_{\gamma}(\mbf{x}):=\sum_{\alpha\in \mathscr{D}^{a/b}_\gamma}\alpha(\mbf{x})},\qquad {\rm and}\qquad 
   \bleu{\Park'_{m.n}(\mbf{x}):=\sum_{\alpha\in \mathscr{D}'_{m,n}}\alpha(\mbf{x})}.$$
In the same spirit as previously, we consider a ring homomorphism  $\Theta_{a,b}:\Lambda\longrightarrow \Lambda$ that sends degree $d$ homogeneous symmetric functions to degree $n=bd$ homogeneous symmetric functions. Just as before, this homomorphism  is characterized by its effect on the algebraic generators $p_k(\mbf{x})$, setting
\begin{displaymath}
     	   \bleu{\Theta_{a,b}(p_k(\mbf{x})):= \frac{1}{a}\,\sum_{\lambda\vdash n} (ak)^{\ell(\lambda)}\, \frac{p_\lambda(\mbf{x})}{z_\lambda} },
\end{displaymath}
For a degree $d$ symmetric function function $f_d(\mbf{x})$, we also write $f_d^{a/b}(\mbf{x})$ for the image of $f_d(\mbf{x})$ under the homomorphism  $\Theta_{a,b}$, \emph{i.e.}:
    $$\bleu{f_d^{a/b}(\mbf{x}) := \Theta_{a,b}(f_d(\mbf{x}))}.$$
The following proposition extends to the $(m,n)$-parking function Frobenius the approach of Bizley (see  \cite{bizley}) for the enumeration of $(m,n)$-Dyck paths.  Its proof makes use of the notion of \define{rank} of points along $(m,n)$-paths, which is simply defined as
    $$\bleu{\srank(x,y):= m\,y+n\,x}.$$

\begin{proposition}\label{prop_bizley}
Let $(m,n)=(ad,bd)$, with $a$ and $b$ coprime, and consider $\gamma=c_1\cdots c_k$ a composition of $d$. Then we have
 \begin{eqnarray}
     	   \bleu{\Park_{m,n}(\mbf{x})}&=&\bleu{\Theta_{a,b}(h_d(\mbf{x}))},\\
	   \bleu{\Park'_{m,n}(\mbf{x})}&=&\bleu{\Theta_{a,b}((-1)^{d-1}e_d(\mbf{x}))},\qquad {\rm and}\qquad\label{primitif_bizley}\\
	   \bleu{\Park^{a/b}_{\gamma}(\mbf{x})}&=&\bleu{\Theta_{a,b}( (-1)^{d-k}e_{c_1}(\mbf{x})\cdots e_{c_k}(\mbf{x}))}.\label{retours_bizley}
\end{eqnarray} 
\end{proposition}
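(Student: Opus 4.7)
The plan is to reduce all three assertions to the single primitive identity \pref{primitif_bizley} via the multiplicativity of the ring homomorphism $\Theta_{a,b}$, and then to establish the primitive case by lifting Bizley's cyclic enumeration argument to the $\S_n$-equivariant setting.

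First I would prove a \emph{primitive decomposition}: every $(ad,bd)$-Dyck path $\alpha$ returns to the diagonal at the lattice points prescribed by a unique composition $\gamma=(c_1,\ldots,c_k)\models d$, and the arcs between consecutive returns are translates of primitive $(ac_i,bc_i)$-Dyck paths $\alpha_i$. The riser composition $\rho(\alpha)$ is the concatenation of the $\rho(\alpha_i)$, and since $h_{\rho}(\mathbf{x})$ is multiplicative under concatenation of compositions, $\alpha(\mathbf{x})=\prod_i\alpha_i(\mathbf{x})$. Summing over all $\alpha$ with fixed return composition $\gamma$ then gives
\begin{equation*}
\Park^{a/b}_\gamma(\mathbf{x})=\prod_{i=1}^k \Park'_{ac_i,bc_i}(\mathbf{x}),
\end{equation*}
which is \pref{retours_bizley} modulo the primitive identity. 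Summing further over all $\gamma\models d$ and packaging the result as a generating series yields
\begin{equation*}
1+\sum_{d\geq 1}\Park_{ad,bd}(\mathbf{x})\,w^d \;=\; \frac{1}{1-\sum_{d\geq 1}\Park'_{ad,bd}(\mathbf{x})\,w^d}.
\end{equation*}
The classical identity $H(w)\cdot E(-w)=1$ in $\Lambda[[w]]$ rewrites as $\sum_{d\geq 0}h_d(\mathbf{x})\,w^d=1/\bigl(1-\sum_{d\geq 1}(-1)^{d-1}e_d(\mathbf{x})\,w^d\bigr)$, which has the same shape. Since $\Theta_{a,b}$ is a ring homomorphism, it preserves this relation term-by-term, so the first identity of the proposition follows formally from \pref{primitif_bizley}.

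The heart of the argument is therefore the primitive case. Here I would follow Bizley's cyclic strategy based on the rank function $\srank(x,y)=my+nx$, which attains its global minimum $mn$ on a primitive path only at the two endpoints. Given a labelled step-word of type $(m,n)$ (an arbitrary sequence of $m$ east-steps and $n$ south-steps with south-steps labelled by $\{1,\ldots,n\}$), the cyclic group $C_{m+n}$ acts by rotation and this action commutes with the $\S_n$-action on labels. Rotations which place a rank-minimum at the origin produce Dyck paths, and those with a \emph{unique} rank-minimum produce primitive ones; cyclic stabilizers of the generic orbit are recorded by a partition $\lambda\vdash d$. An equivariant Burnside average over $C_{m+n}$-orbits, performed at the level of Frobenius characteristics rather than mere cardinalities, should yield
\begin{equation*}
\Park'_{ad,bd}(\mathbf{x})=\sum_{\lambda\vdash d}\frac{(-1)^{\ell(\lambda)-1}}{z_\lambda}\,\Theta_{a,b}\bigl(p_\lambda(\mathbf{x})\bigr),
\end{equation*}
which is precisely the power-sum expansion of $\Theta_{a,b}((-1)^{d-1}e_d)$ obtained from $\log E(w)=\sum_{k\geq 1}(-1)^{k-1}p_k(\mathbf{x})\,w^k/k$.

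The main obstacle will be this last step: upgrading Bizley's purely numerical cyclic argument to the full $\S_n$-equivariant setting. Dimension counting only requires orbit-counting, but here one must keep track of the entire permutation representation and verify that cyclic averaging interacts coherently with the parabolic induction from the Young subgroup fixed by the riser composition; equivalently, one must check that taking Frobenius characteristics commutes with the $C_{m+n}$-orbit decomposition. Once this equivariance is secured, the coefficients $(-1)^{\ell(\lambda)-1}/z_\lambda$ emerge automatically from partition-types of stabilizers, and the definition of $\Theta_{a,b}$ on the power-sum basis delivers the required identity.
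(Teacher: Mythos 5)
Your reduction steps are sound and coincide with the paper's: the factorization $\Park^{a/b}_\gamma(\mbf{x})=\prod_i\Park'_{ac_i,bc_i}(\mbf{x})$, the generating-series identity $1+\sum_d\Park_{ad,bd}(\mbf{x})w^d=\bigl(1-\sum_d\Park'_{ad,bd}(\mbf{x})w^d\bigr)^{-1}$, and the transport of $H(w)\,E(-w)=1$ through the ring homomorphism $\Theta_{a,b}$ are all exactly how the paper passes between \pref{primitif_bizley}, \pref{retours_bizley} and the $h_d$ identity. The problem is that you have left the entire mathematical content in the step you defer, and the sketch you give for it cannot work as stated. A Burnside average over $C_{m+n}$ produces nonnegative coefficients indexed by cyclic stabilizers, and the stabilizer of a word under rotation is a cyclic group of order $j$ for a single divisor $j\mid d$ --- not a partition $\lambda\vdash d$, and never a sign $(-1)^{\ell(\lambda)-1}$. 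The signed sum over partitions of $d$ in your target expression is precisely the power-sum expansion of $(-1)^{d-1}e_d$, i.e.\ it encodes the M\"obius inversion of the concatenation-into-primitives structure --- which is the logarithm you have already spent on the other side of your reduction. Attempting to obtain \pref{primitif_bizley} directly from cyclic averaging is therefore circular: the ``main obstacle'' you flag is not an equivariance technicality but the missing argument.

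The paper resolves this differently, and the fix is to redirect your cyclic argument at a different target. Let $\Park^t_{ad,bd}(\mbf{x})$ be the contribution of Dyck paths with exactly $t$ diagonal contacts. The paper does \emph{not} compute $\Park'=\Park^1$ directly; instead it proves the weighted identity
\begin{equation*}
\sum_{t=1}^d \frac{1}{t}\,\Park^t_{ad,bd}(\mbf{x})\;=\;\frac{1}{d}\,p_d^{a/b}(\mbf{x}),
\end{equation*}
via an explicit bijection $\Dyck{m}{n}^t\times[m]\;\bijection\;\Bin{m}{n}^t\times[t]$, where $\Bin{m}{n}$ is the set of \emph{unrestricted} paths ending in an east step and the superscript $t$ records the number of highest-rank points: one cuts after a chosen east step and rotates, remembering which of the $t$ highest points was the original endpoint. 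Equivariance is automatic here because rotation preserves the riser composition, so $\alpha(\mbf{x})=\beta(\mbf{x})$ term by term and the Frobenius of the labelled set $\LBin{m}{n}$ is simply $\sum_{\alpha\in\Bin{m}{n}}h_{\riser(\alpha)}(\mbf{x})=a\,p_d^{a/b}(\mbf{x})$ --- no interaction between cyclic averaging and parabolic induction needs to be checked. Feeding this weighted identity into your own relation $\Park^t_{ad,bd}(\mbf{x})=\sum_{\gamma\comp_t d}\prod_i\Park'_{ac_i,bc_i}(\mbf{x})$ gives $\log\bigl(1/(1-P'(z))\bigr)=\sum_k p_k^{a/b}(\mbf{x})z^k/k$, from which \pref{primitif_bizley} follows by exponentiating; the rest of your argument then goes through unchanged.
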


\begin{proof}[\bf Proof]
The proof is essentially an adaptation of Bizley's original proof, integrating symmetric functions arguments. 

For the purpose of our argument, we consider the set $\Bin{m}{n}$ of all south-east lattices paths going from $(0,n)$ to $(m,0)$, 
which end with an east-step (hence there is no condition relative to the diagonal). We think of these as length $m+n$ ``words'' in the letters $y$ and $x$, which encode the successive steps, with $y$ standing for a south-step and $x$ for an east-step. Thus, the paths in $\Bin{m}{n}$ bijectively correspond to all possible words containing $n$ copies of $y$ and $m$ copies of $x$, with final letter equal to $x$.
It clearly follows that the number of such words/paths is
\begin{equation}
       |\Bin{m}{n}|={m+n-1 \choose n}.
\end{equation}
We bijectively label the $n$ south-steps of such paths with the integers $1$ to $n$,  just as we earlier did for parking functions. 
This is to say that labels decrease along consecutive south-steps. The resulting set of labelled paths is denoted by $\LBin{m}{n}$.
The symmetric group $\S_n$ acts on $\LBin{m}{n}$ by permuting labels,
and the Frobenius characteristic of this (permutation) action is 
\begin{equation}\label{eq:FrobBin}
\sum_{\alpha\in\Bin{m}{n}}\alpha(\mbf{x})=\sum_{\lambda\vdash n} (ak)^{\ell(\lambda)}\, \frac{p_\lambda(\mbf{x})}{z_\lambda} =a\,p_d^{a/ b}(\mbf{x}),
\end{equation}
since the stabilizers of any orbit of $\LBin{m}{n}$ (which corresponds to some
fixed underlying path $\alpha$) is the Young subgroup $\S_{r_1}\times \S_{r_2}\times\cdots\times \S_{r_j}$,
where $\riser(\alpha)=(r_1,r_2,\dots,r_j)$.

We next consider {\em highest rank points} along a path $\alpha$ in $\Bin{m}{n}$. Namely, these are the points $(i,j)$ on the path for which $\srank(i,j)$ reaches its maximal value.
To simplify our discussion, we remove the point $(m,0)$ from those considered. 
Clearly, a path $\alpha$ may have more than one highest rank point, but the number of such points is at most $d$. We denote by $\Bin{m}{n}^t$ (resp. $\Dyck{m}{n}^t$) the subset of $\Bin{m}{n}$
(resp. $\Dyck{m}{n}$) consisting of paths with exactly $t$ highest points.

We then consider {\em cyclic permutations} of $\alpha=\ell_1\cdots \ell_{m+n}$ (where either $\ell_i=x$ or $\ell_i=y$) in the following sense.
Choosing any occurence of $x$ in $\alpha$, say at $\ell_i$, we cut the path after this $x$, and build 
a new word $\beta$ by transposing the two resulting components of $\alpha$:
   $$\alpha=\bleu{\ell_1\cdots \ell_i}\,\rouge{\ell_{i+1}\cdots  \ell_{m+n}}\quad \mapsto\quad  
       \beta=\bleu{\ell_{i+1}\cdots  \ell_{m+n}}\,\rouge{\ell_1\cdots \ell_i}.$$
Observe that the number of highest rank points is invariant
under such cyclic permutations; and that these highest rank points eventually lie on the diagonal after
 any cyclic permutation.
Moreover, the riser-composition of $\alpha$ is cyclicly preserved, hence 
   $$\alpha(\mbf{x})=\beta(\mbf{x}).$$
Figure \ref{fig:bizley} illustrates the notion of highest point (two in this case,
represented by blue dots) and the procedure of cyclic permutation
(the cut  $\ell_i$ appears as a black cross).

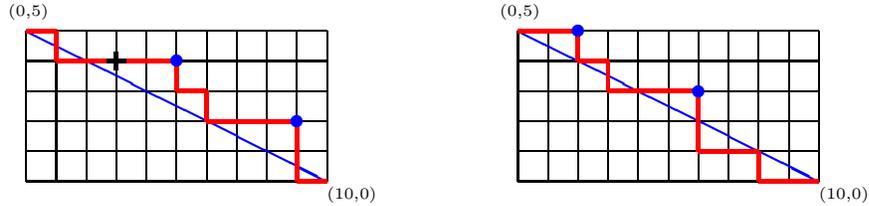
\begin{figure}[ht]
\setlength{\unitlength}{4mm}
\setlength{\carrelength}{4mm}
\def\jcarre{\put(0,0){\jaune{\linethickness{\carrelength}\line(1,0){1}}}}
\def\palecarre{\bleupale{\linethickness{\unitlength}\line(1,0){1}}}
\begin{center}
\begin{picture}(11,5)(0,0)
\multiput(0,0)(0,1){6}{\line(1,0){10}}
\multiput(0,0)(1,0){11}{\line(0,1){5}}
\thicklines
\put(-.6,5.5){$\scriptscriptstyle(0,5)$}
\put(10,-.6){$\scriptscriptstyle(10,0)$}
 \put(0,5){\bleu{\line(2,-1){10}}}
  \linethickness{.5mm}
\put(0,5){\rouge{\line(1,0){1}}}
\put(1,5){\rouge{\line(0,-1){1}}}
\put(1,4){\rouge{\line(1,0){4}}}
\put(5,4){\rouge{\line(0,-1){1}}}
\put(5,3){\rouge{\line(1,0){1}}}
\put(6,3){\rouge{\line(0,-1){1}}}
\put(6,2){\rouge{\line(1,0){3}}}
\put(9,2){\rouge{\line(0,-1){2}}}
\put(9,0){\rouge{\line(1,0){1}}}
\put(4.74,3.74){\bleu{$\bullet$}}
\put(8.74,1.74){\bleu{$\bullet$}}
\put(2.7,4){\line(1,0){0.6}}
\put(3,3.7){\line(0,1){0.6}}
\end{picture}
\hskip 2cm
\begin{picture}(11,5)(0,0)
\multiput(0,0)(0,1){6}{\line(1,0){10}}
\multiput(0,0)(1,0){11}{\line(0,1){5}}
\thicklines
\put(-.6,5.5){$\scriptscriptstyle(0,5)$}
\put(10,-.6){$\scriptscriptstyle(10,0)$}
 \put(0,5){\bleu{\line(2,-1){10}}}
  \linethickness{.5mm}
\put(0,5){\rouge{\line(1,0){2}}}
\put(2,5){\rouge{\line(0,-1){1}}}
\put(2,4){\rouge{\line(1,0){1}}}
\put(3,4){\rouge{\line(0,-1){1}}}
\put(3,3){\rouge{\line(1,0){3}}}
\put(6,3){\rouge{\line(0,-1){2}}}
\put(6,1){\rouge{\line(1,0){2}}}
\put(8,1){\rouge{\line(0,-1){1}}}
\put(8,0){\rouge{\line(1,0){2}}}
\put(1.74,4.74){\bleu{$\bullet$}}
\put(5.74,2.74){\bleu{$\bullet$}}
\end{picture}

\end{center}
\caption{Cyclical permutation of an element of $\Bin{10}{5}$ with 2 highest points.}
\label{fig:bizley}
\end{figure}

The key point is the following: the cyclic permutation allows us
to build a bijection
\begin{equation}\label{bij:bizley1}
\Dyck{m}{n}^t\times [m] \overset{\sim}{\longrightarrow} \Bin{m}{n}^t\times [t]. 
\end{equation} 
Consider $(\alpha,j)\in\Dyck{m}{n}^t\times [m]$.
By cutting $\alpha$ after its $j$-th east step, and performing cyclic permutation,
we get an element $\gamma$ of $\Bin{m}{n}^t$. 
We may keep track of the final point of $\alpha$, which corresponds to one of the $t$
highest points of $\gamma$ (call it $h$).
The reverse bijection consists in performing cyclic permutation to $\gamma$
in position $h$: we get back $\alpha$ and we keep track of the final point 
of $\gamma$ as the index $j$.
  
Now recall from our hypothesis that $(m,n)=(ad,bd)$ with $(a,b)$ coprime.
Denote by $\Park_{ad,bd}^t(\mbf{x})$ the Frobenius characteristic
of the parking functions associated to $(ad,bd)$-Dyck paths having exactly $t$
contact points with the diagonal. 
We get from bijection \eqref{bij:bizley1} that $\sum_{t=1}^d ({da/t})\, \Park_{ad,bd}^t(\mbf{x})$ 
is the Frobenius characteristic of $\LBin{ad}{bd}$.
Whence, because of \eqref{eq:FrobBin} and after simplification:
\begin{equation}\label{eq:bizley}
\sum_{t=1}^d {1\over t}\, \Park_{ad,bd}^t(\mbf{x}) = {1\over d}\,p_d^{a/ b}(\mbf{x}).
\end{equation}
Since the case $t=1$ of $\Park^t_{ad,bd}(\mbf{x})$ corresponds to the Frobenius characteristic primitive $(ad,bd)$-Dyck path, we clearly have
\begin{equation}
\Park_{ad,bd}^t(\mbf{x})=\sum_{\gamma\models_t d} \Park'_{ac_1,bc_1}(\mbf{x})\,\Park'_{ac_2,bc_2}(\mbf{x})\, \cdots\, \Park'_{ac_k,bc_k}(\mbf{x})
\end{equation}
where the sum is over length $t$  compositions $\gamma=({c_1},{c_2},\dots,{c_t})$ of  $d$.
In other terms, if we set 
    $$\Park'_{a,b}(\mbf{x};z):=\sum_{j=1}^\infty \Park'_{aj,bj}(\mbf{x}) \, z^j,$$ 
then  $\Park_{ak,bk}^t(\mbf{x})$ is the coefficient of $z^k$ in ${\big(\Park'_{a,b}(\mbf{x};z)\big)}^t$.
We may thus consider that equation \eqref{eq:bizley} says that ${1\over k}\,p_k^{a/ b}(\mbf{x})$
is the coefficient of $z^k$ in $\log(1/(1-\Park'_{a,b}(\mbf{x};z))$, so that
\begin{eqnarray}\label{eq:Pxz}
\Park'_{a,b}(\mbf{x};z) &=& 1-\exp\Big(-\sum_{k=1}^\infty p_k^{a/ b}(\mbf{x})\,z^k/k\Big)\\
                &=&\Theta_{a,b}\left(\sum_{d=1}^\infty (-1)^{d-1}e_d(\mbf{x})\,z^d\right),
\end{eqnarray}
which is equivalent to \pref{primitif_bizley}, which in turn readily implies \pref{retours_bizley}.  
Clearly, 
  \begin{eqnarray*}
      \Park_{ad,bd}(\mbf{x})&=&\sum_{t>0}\Park_{ak,bk}^t(\mbf{x}),
   \end{eqnarray*}
so that
\begin{eqnarray}\label{eq:Qxz}
\Park_{a,b}(\mbf{x};z)&=&\frac{1}{1-\Park'_{a,b}(\mbf{x};z)}=\exp\Big(\sum_{k=1}^\infty p_k^{a/ b}(\mbf{x})\,z^k/k\Big)\\
      &=& \Theta_{a,b}\left(\sum_{d=0}^\infty h_d(\mbf{x})\,z^d\right),
\end{eqnarray}
which concludes the proof of Proposition \ref{prop_bizley}.
\end{proof}

For example, for any $a$ and $b$ coprime, we get
\begin{eqnarray*}
     	   {\Park_{2a,2b}(\mbf{x})}&=&\frac{1}{2a} h_{2b}[2a\,\mbf{x}] + \frac{1}{2}\left(\frac{1}{a}h_{b}[a\,\mbf{x}]\right)^2,\\
	   {\Park'_{2a,2b}(\mbf{x})}&=&\frac{1}{2a} h_{2b}[2a\,\mbf{x}] - \frac{1}{2}\left(\frac{1}{a}h_{b}[a\,\mbf{x}]\right)^2.
\end{eqnarray*} 
which respectively give the Frobenius that correspond to $(2a,2b)$-Dyck paths, and primitive $(2a,2b)$-Dyck paths. To get explicit formulas for the number of $(m,n)$-parking functions, we need simply compute the scalar product $\scalar{\Park_{m,n}(\mbf{x})}{h_1^n(\mbf{x})}$.
Likewise for $\Park'_{m,n}$ or $\Park^{a/b}_\gamma$.



\section{Bi-Frobenius}

As before, let $(m,n)=(ad,bd)$ with $a$ and $b$ coprime. We can now consider the ``riser-step`` bi-Frobenius of $(m,n)$-parking function, which may be defined/calculated to be 
   \begin{equation}\label{biFrob}
        \bleu{\Park_{m,n}(\mathbf{x},\mathbf{y}):=\sum_{\alpha\in\Dyck{m}{n}} \alpha(\mathbf{x})\,
           \alpha'(\mathbf{y})},
  \end{equation}
where $\mathbf{y}=y_1,y_2,\ldots$ stands for another denumerable alphabet of variables. Hence, $\alpha(\mathbf{x})$ encodes the ``riser structure'' of $\alpha$, whereas $\alpha'(\mathbf{y})$ encodes its ``step structure'' (which are the risers of the conjugate path $\alpha'$). Clearly this bi-Frobenius affords the symmetry
   \begin{equation}
        \bleu{\Park_{m,n}(\mathbf{x},\mathbf{y})=
        \Park_{n,m}(\mathbf{y},\mathbf{x})}.
  \end{equation}

Once again,  there is a Bizley-like formula for $\Park_{m,n}(\mathbf{x},\mathbf{y})$, which subsumes (up to some calculations) all of our previous results. Indeed, we have
 \begin{theorem}\label{theo:bizley_xy}
    For all coprime pair $(a,b)$, the following holds
       \begin{equation}\label{eq:theo_xy}
          \bleu{\sum_{d=0}^\infty \Park_{ad,bd}(\mathbf{x},\mathbf{y})\ z^d =
               \exp\!\left( \sum_{k\geq 1}p_k^{a/b}(\mbf{x},\mbf{y}) z^k/k \right)},
       \end{equation}
  where we set
       \begin{equation}\label{eq:p_xy}
    \bleu{p_k^{a/b}(\mbf{x},\mbf{y}):=\sum_{j=1}^k \frac{k}{j} \sum_{\rho\comp_j bk,\ \sigma\comp_j ak}
            h_\rho(\mathbf{x})\,h_\sigma(\mathbf{y}) },
       \end{equation}
 and we write $\rho\comp_j n$ to say that $\rho$ is a composition of $n$ having $j$ parts.
 \end{theorem}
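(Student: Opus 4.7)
The proof adapts the Bizley--Grossman cyclic argument of Proposition~\ref{prop_bizley} to the bi-labelled setting, in three stages.

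First, I reduce the theorem to a single key identity. From the primitive decomposition $\Park^t_{ad,bd}(\mbf{x},\mbf{y})=\sum_{\gamma\comp_t d}\prod_i\Park'_{ac_i,bc_i}(\mbf{x},\mbf{y})$, the generating function $P(z)=\sum_d\Park_{ad,bd}(\mbf{x},\mbf{y})\,z^d$ satisfies $P(z)=1/(1-P'(z))$, where $P'(z)=\sum_d\Park'_{ad,bd}(\mbf{x},\mbf{y})\,z^d$. Taking the logarithm of both sides and comparing the $z^d$-coefficients, the theorem reduces to the identity
\[
\sum_{t=1}^d \frac{1}{t}\,\Park^t_{ad,bd}(\mbf{x},\mbf{y}) \;=\; \frac{1}{d}\,p_d^{a/b}(\mbf{x},\mbf{y}).
\]

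Second, I give a combinatorial interpretation of the right-hand side. Let $\widetilde{\mathscr{B}}_{m,n}^{(j)}$ denote the bi-labelled lattice paths from $(0,n)$ to $(m,0)$ that begin with a south step and end with an east step and that have exactly $j$ south runs (hence $j$ east runs), with south labels $1,\ldots,n$ decreasing within each south run and east labels $1,\ldots,m$ decreasing within each east run. Its $\S_n\times\S_m$-bi-Frobenius characteristic is
\[
B_j(\mbf{x},\mbf{y})\;=\;\sum_{\rho\,\comp_j\, n,\;\sigma\,\comp_j\, m}h_\rho(\mbf{x})\,h_\sigma(\mbf{y}).
\]
The cyclic group $C_j$ acts freely on $\widetilde{\mathscr{B}}_{m,n}^{(j)}\times[m]$ by simultaneously rotating the $j$ run pairs and the position of the marked east step, so the quotient has bi-Frobenius $(m/j)B_j(\mbf{x},\mbf{y})$. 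Summing over $j$ and using the definition \eqref{eq:p_xy} of $p_d^{a/b}(\mbf{x},\mbf{y})$ yields $\sum_j(m/j)B_j(\mbf{x},\mbf{y}) = a\,p_d^{a/b}(\mbf{x},\mbf{y})$, which is $m=ad$ times the right-hand side of the key identity.

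Third, I establish, via a cycle-lemma-type bijection, that this same quantity also equals $\sum_t(m/t)\Park^t_{ad,bd}(\mbf{x},\mbf{y})$. The bijection, equivariant for $\S_n\times\S_m$, identifies
\[
\bigsqcup_j\bigl(\widetilde{\mathscr{B}}_{m,n}^{(j)}\times[m]\bigr)/C_j \;\longleftrightarrow\; \bigsqcup_t\bigl(\Dyck{m}{n}^t\times[m]\bigr)/C_t,
\]
where $C_t$ rotates the primitive pieces of a Dyck path and correspondingly shifts its marked east step. The map linearizes each cyclic $\widetilde{\mathscr{B}}$-class at one of its highest-rank points; since a local maximum of rank on such a cyclic path is always attained at a run boundary, all $t$ highest points of the cyclic class produce valid Dyck representatives, matching the orbit sizes under $C_j$ and $C_t$. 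The main obstacle is verifying $\S_n\times\S_m$-equivariance, because cyclic shifts may cut across an east run and split its labels between two new runs; the bi-Frobenius contributions nonetheless balance because $h_\rho(\mbf{x})h_\sigma(\mbf{y})$ is invariant under cyclic rotations of the compositions $(\rho,\sigma)$, and the asymmetries introduced by run splittings are absorbed by the $C_j$-averaging in the quotient. With the bijection in hand, $\sum_t(m/t)\Park^t_{ad,bd}(\mbf{x},\mbf{y})=a\,p_d^{a/b}(\mbf{x},\mbf{y})$, whence the key identity follows and, exponentiating as in the first step, so does the theorem.
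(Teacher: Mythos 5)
Your overall strategy is exactly the paper's: reduce \eqref{eq:theo_xy}, via the primitive decomposition and the exponential formula, to the identity $\sum_{t=1}^d\frac{1}{t}\Park^t_{ad,bd}(\mbf{x},\mbf{y})=\frac{1}{d}\,p_d^{a/b}(\mbf{x},\mbf{y})$, and then prove that identity by Bizley's cyclic argument refined to keep track of corners. The paper phrases the cyclic step as a bijection $\Dyck{m}{n}^{t,j}\times[j]\simeq\cBin{m}{n}^{t,j}\times[t]$ between marked sets; you phrase it as a bijection between quotients by cyclic actions. The two are equivalent, but your packaging as written has two soft spots.

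First, ``$C_j$ acts freely, so the quotient has bi-Frobenius $(m/j)B_j$'' is a non sequitur. For a free $C_j$-action commuting with $\S_n\times\S_m$, the permutation character of the quotient at $g$ counts orbits $O$ with $gO=O$, and such an orbit need not consist of $g$-fixed points: it may satisfy $gx=cx$ for some $c\neq e$ (take $\S_2$ and $C_2$ both acting by the swap on a two-element set; the quotient carries the trivial character, not half of the regular one). What saves your construction is precisely the $[m]$-marking: a nontrivial rotation of run pairs shifts the index of the marked east step by a nonzero amount modulo $m$, while $g$ fixes that index, so $gx=cx$ forces $c=e$; only then does the quotient's character equal $\frac1j$ times that of $\widetilde{\mathscr{B}}^{(j)}_{m,n}\times[m]$. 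You introduce the marking but never invoke it for this purpose, and the same verification is needed for $\bigl(\Dyck{m}{n}^t\times[m]\bigr)/C_t$. Second, the equivariance worry about ``cutting across an east run'' does not arise, and the proposed fix (``absorbed by the $C_j$-averaging'') is not an argument. Every cut you ever perform --- at a highest-rank point, at a return to the diagonal, or when rotating run pairs --- occurs at an east-to-south transition (rank increases by $n$ on east steps and drops by $m$ on south steps, so rank maxima and diagonal contacts sit between an east run and the following south run), hence no run is ever split and both riser compositions are merely rotated; equivariance is then immediate. With these two repairs the argument is correct and coincides with the paper's proof.
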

 
\begin{proof}[\bf Proof]
The proof of Theorem \ref{theo:bizley_xy} uses a refinement of the argument used to prove Proposition \ref{prop_bizley}.
We introduce the set $\cBin{m}{n}$ of lattice paths in $\Bin{m}{n}$ with the additional
condition that they start with a south step.
By definition, {\em corners} of a south-east lattice path correspond to points that lie between a south-step and a following east-step. We consider  $(m,n)$-Dyck paths $\alpha$ having $t$ highest points
and $j$ corners, and modify the argument of Proposition \ref{prop_bizley} by restricting cuts to points that lie at one of the $j$ corners. In the same way as \eqref{bij:bizley1}, we get a bijection
\begin{equation}\label{bij:bizley2}
\Dyck{m}{n}^{t,j}\times [j] \overset{\sim}{\longrightarrow} \cBin{m}{n}^{t,j}\times [t],
\end{equation} 
where superscript $t,j$ indicates a restriction to paths with exactly $t$
highest points and $j$ corners.

Set 
   $$\Park_{m,n}^{t,j}(\mbf{x},\mbf{y}):=\sum_{\alpha\in \mathscr{D}^{j,j}{m,n}} \alpha(\mbf{x})\, \alpha'(\mbf{y})$$ 
Bijection \eqref{bij:bizley2} implies:
$$
{1\over t}\Park_{m,n}^{t,j}(\mbf{x},\mbf{y})={1 \over j}
\sum_{\rho\comp_j n,\ \sigma\comp_j m}
            h_\rho(\mathbf{x})\,h_\sigma(\mathbf{y}).
$$
Summing over $j$ (writing $(m,n)$ as $(ad,bd)$), we obtain
 \begin{equation}\label{eq:bizley_xy}
\sum_{t=1}^d {1\over t}\, \Park_{ad,bd}^t(\mbf{x},\mbf{y}) = 
\sum_{j=1}^d {1 \over j} \sum_{\rho\comp_j bd,\ \sigma\comp_j ad}
            h_\rho(\mathbf{x})\,h_\sigma(\mathbf{y}) 
= {1\over d}\,p_d^{a/b}(\mbf{x},\mbf{y}).
\end{equation}
Then, \eqref{eq:theo_xy} is deduced from \eqref{eq:bizley_xy}, just 
as \eqref{eq:Pxz} was deduced from \eqref{eq:bizley}.

\end{proof}
 An alternate formula for $p_k^{a/b}(\mbf{x},\mbf{y})$, which involves much less terms, is easily seen to be
        \begin{equation}
    \bleu{p_k^{a/b}(\mbf{x},\mbf{y}):=k \sum_{j=1}^k \frac{1}{j} \sum_{\mu\vdash_j bk,\ \nu\vdash_j ak} \binom{j}{\lambda(\mu)}\binom{j}{\lambda(\nu)}
            h_\mu(\mathbf{x})\,h_\nu(\mathbf{y}) }.
       \end{equation}
The second sum is now over $j$-part partitions, with $\lambda(\mu)$ denoting the partition of $j$ which indicates the multiplicities of the parts of $\mu$ (likewise for $\nu$), and $\binom{j}{\lambda}$ stands for the relevant multinomial coefficient. Hence, the above formula is simply obtained by collecting in~\pref{eq:p_xy} the compositions that have same parts structure. 

Observe that we get back Proposition~\ref{prop_bizley} from Theorem~\ref{theo:bizley_xy}, if we take the usual symmetric function scalar product with $h_n(\mbf{y})$ on each side of \pref{eq:bizley_xy}. Indeed, we first recall that this scalar product is such that
  $$\scalar{h_\mu(\mbf{y})}{h_n(\mbf{y})}=1,$$
for any partition (or composition) of $n$. This implies that $\scalar{-}{h_n(\mbf{y})}$ is a ring homomorphism. Hence, we need only prove that
\begin{equation}\label{eq:fin1}
      \scalar{p_k^{a/b}(\mbf{x},\mbf{y})}{h_n(\mbf{y})}=
      k \sum_{j=1}^k \frac{1}{j} \sum_{\mu\vdash_j bk,\ \nu\vdash_j ak} \binom{j}{\lambda(\mu)}\binom{j}{\lambda(\nu)}
            h_\rho(\mathbf{x})
\end{equation}
  is equal to          
\begin{equation}\label{eq:fin2}
        p_k^{a/b}(\mbf{x})=
      \frac{1}{a}\,\sum_{\lambda\vdash bk} (ak)^{\ell(\lambda)}\, \frac{p_\lambda(\mbf{x})}{z_\lambda}.
\end{equation}
This is obtained as follows.
The RHS of \eqref{eq:fin1} is equal to 
$ k \sum_{t,j} \frac{1}{j} \sum_{\gamma\in\cBin{m}{n}^{t,j}} h_{\rho(\gamma)}(\mbf{x})$
whereas the RHS of \eqref{eq:fin2} is equal to 
$ {k\over m} \sum_{\gamma\in\Bin{m}{n}} h_{\rho(\gamma)}(\mbf{x}).$
Because of bijections \eqref{bij:bizley1} and \eqref{bij:bizley2}, these two expressions are equal
(and equal to $ k \sum_{t}  {1\over t} \sum_{\alpha\in\Dyck{m}{n}^t} h_{\rho(\gamma)}(\mbf{x})$).

\section{Further considerations}
Extensions of these considerations, linked to several interesting questions (see~\cite{athanasiadis,ABG,hall,HHLRU,zabrocki}), take into account parameters on parking functions such as ``area'' and ``dinv''. To formulate the analogous results, one needs to work with an algebra of operators on symmetric functions isomorphic to the elliptic Hall algebra studied in \cite{burban,gorsky,schiffmann}. In this framework, the homomorphism $\Theta_{a,b}$ sends a symmetric function to an operator on symmetric functions. In turn, formulas are obtained by applying the resulting operator to the symmetric function $1$.

In this light, it is worth observing that the image under $\Theta_{a,b}$ of other symmetric functions gives rise to significant formulas. The interesting feature of these formulas is that their Schur function expansion  have positive integer coefficients. It is usual to say that they are ``Schur-positive''.  This is the case for hook Schur functions\footnote{We use here the Frobenius notation, hence the relevant hook shape has a part of size $k+1$, and $j$ parts of size $1$.} $s_{(k|j)}(\mbf{x})$, where $d=k+j+1$, for which we can easily show $h$-positivity of $\Theta_{a,b}((-1)^j s_{(k|j)}(\mbf{x}))$, which implies Schur-positivity. Indeed, one easily verifies that the symmetric function   $(-1)^j s_{(k|j)}(\mbf{x})$ expands with positive integer coefficients in the basis $(-1)^{|\mu|-\ell(\mu)}\,e_\mu(\mbf{x})$;
  and we have seen that $\Theta_{a,b}((-1)^{j-1} e_j(\mbf{x}))$ expands with positive integer coefficients in the basis $h_\nu(\mbf{x})$. Hence, applying the homomorphism $\Theta_{a,b}$ to $(-1)^j s_{(k|j)}(\mbf{x})$ gives rise to an $h$-positive expression. This expression is also Schur-positive, since any $h_\nu(\mbf{x})$ is.

Extensive experiments suggest that, for all $\mu$, $\Theta_{a,b}((-1)^{\iota(\mu)} s_\mu(\mbf{x}))$ is Schur-positive, where 
$\iota(\mu)$ is the number of cells $(i,j)$ of the diagram of $\mu$, such that $j>i$. Moreover, all of this seems to carry over to the bi-Frobenius case. An intriguing question is to expand the elliptic Hall algebra techniques to cover these bi-Frobenius. The hope is that this would lead to more explicit formulas for three parameter expressions such as
\begin{equation}\label{aire_dinv}
   \bleu{\sum_{\alpha\in\Dyck{m}{n}} q^{\area(\alpha)} \alpha(\mbf{x};t)\alpha'(\mbf{y};r)},
\end{equation}
 were $\alpha(\mbf{x};t)$ is an LLT-polynomial calculated using the dinv-statistic on $(m,n)$-parking functions (see~\cite{hall} for more details on all this):
    $$\bleu{\alpha(\mbf{x};t):=\sum_{\pi\Park_\alpha} t^{\dinv(\pi)}\,s_{\co(\pi)}(\mbf{x})},$$
 with $\co(\pi)$ is a composition that encodes ``descents'' of the parking function $\pi$. Recall that composition index Schur function may be defined by a suitable adaptation of the Jacobi-Trudi identity. Up to asign-twist,
 expression~\pref{aire_dinv} specializes to the right-hand side of \pref{biFrob}. It is known that the LLT-polynomial $\alpha(\mbf{x};t)$ is Schur-positive.


\end{document}